\begin{document}

\newcommand{\Isom}{\mathrm{Isom}}
\newcommand{\tr}{\mathrm{Trace}}
\newcommand{\divergence}{\mathrm{div} }
\newcommand{\id}{\mathrm{id}}
\newcommand{\Ric}{\mathrm{Ric}}

\theoremstyle{plain}
\newtheorem{theorem}{Theorem}[section]
\newtheorem{corollary}{Corollary}[section]
\newtheorem{proposition}{Proposition}[section]
\newtheorem{lemma}{Lemma}[section]
\newtheorem{conjecture}{Conjecture}[section]

\author{Jun-ichi Mukuno}

\address{Graduate School of Mathematics, Nagoya University, Chikusaku, Nagoya 464-8602, Japan}
 \thanks{This work is partially  supported by the Grant-in-Aid for JSPS Fellows No.\ 24-205. }
\email{m08043e@math.nagoya-u.ac.jp}
\title[Fundamental group of a
globally hyperbolic Lorentzian manifold]{On the  fundamental group of a  complete
globally hyperbolic Lorentzian manifold with a lower bound
for the curvature tensor }
\subjclass[2010]{Primary 53C50; Secondary 53C21}
\keywords{fundamental group,  global hyperbolicity, curvature tensor, parametrized Lorentzian product}

\maketitle
\begin{abstract}
   In this paper, we study the fundamental group of
a certain class of   globally hyperbolic Lorentzian manifolds  with  a positive  curvature tensor. 
We prove that the fundamental group of  lightlike geodesically complete   
parametrized Lorentzian  products is finite under the conditions of a positive curvature tensor and the fiber compact.  
 \end{abstract}

\section{Introduction}
Calabi and Markus~{\cite{Ca}} stated the following theorem on the fundamental group of Lorentzian space forms with positive curvature:
\begin{theorem}[Calabi--Markus~{\cite[Theorem 1]{Ca}}]\label{tm:Ca-Ma}
Let $m$ be an integer greater than $2$.
Any $m$-dimensional  Lorentzian space form with positive curvature has a finite fundamental group.
\end{theorem}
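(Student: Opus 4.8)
The plan is to realize the space form as a quotient of de Sitter space and then to exhibit a single compact set that is met by every one of its isometric translates. First I would invoke the Lorentzian analogue of the Killing--Hopf theorem: a complete connected $m$-dimensional Lorentzian manifold of constant positive curvature is isometric to $dS^m/\Gamma$, where $dS^m$ denotes de Sitter space (normalized to curvature $1$) and $\Gamma$ is a group of isometries acting freely and properly discontinuously. I realize $dS^m$ as the spacelike unit hyperquadric $\{x\in\mathbb{R}^{m+1}_1:\langle x,x\rangle=1\}$ in Minkowski space of signature $(-,+,\dots,+)$, so that $\Isom(dS^m)=O(1,m)$ and $\Gamma$ is a discrete subgroup acting by restrictions of linear maps. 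Since $dS^m$ is diffeomorphic to $\mathbb{R}\times S^{m-1}$, the hypothesis $m>2$ guarantees that $dS^m$ is simply connected; hence it is the universal cover of the space form and $\pi_1(dS^m/\Gamma)\cong\Gamma$. It therefore suffices to prove that $\Gamma$ is finite.

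The key geometric observation I would exploit is that de Sitter space contains a compact totally geodesic sphere meeting all of its isometric translates. Fix a future-pointing timelike unit vector $v$ and set $K:=v^{\perp}\cap dS^m$; since $v^{\perp}$ is spacelike of dimension $m$, this is a round $(m-1)$-sphere, hence compact, and $\gamma\cdot K=(\gamma v)^{\perp}\cap dS^m$ for every $\gamma\in\Gamma$. I then claim that $\gamma K\cap K\neq\emptyset$ for all $\gamma$. Note that $\gamma K\cap K=\{v,\gamma v\}^{\perp}\cap dS^m$. If $\gamma v=\pm v$, then $(\gamma v)^{\perp}=v^{\perp}$ and $\gamma K=K$. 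Otherwise $v$ and $\gamma v$ (replaced by $-\gamma v$ if $\gamma$ reverses the time orientation) are distinct future-timelike unit vectors, and the reversed Cauchy--Schwarz inequality $\langle v,\gamma v\rangle\le -1$ makes the Gram determinant of $\operatorname{span}(v,\gamma v)$ negative; thus this $2$-plane is Lorentzian and its orthogonal complement is spacelike of dimension $m-1\ge 2$. Intersecting that complement with $dS^m$ yields a nonempty sphere $S^{m-2}$, so again $\gamma K\cap K\neq\emptyset$.

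Finally, since the free action of $\Gamma$ on $dS^m$ is properly discontinuous, the compact set $K$ satisfies the finiteness of $\{\gamma\in\Gamma:\gamma K\cap K\neq\emptyset\}$. By the preceding step this set is all of $\Gamma$, so $\Gamma$ is finite, which completes the argument.

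I expect the crux to be the middle step: producing one compact witness that is met by every group element. This is where the distinctly Lorentzian feature enters, through the reversed Cauchy--Schwarz inequality for future-timelike unit vectors, which forces the totally geodesic spheres $v^{\perp}\cap dS^m$ to intersect pairwise. The phenomenon is striking because $dS^m$ is \emph{noncompact} (diffeomorphic to $\mathbb{R}\times S^{m-1}$), so finiteness of a properly discontinuous group is not automatic the way it would be on a compact Riemannian sphere; indeed, on other noncompact homogeneous Lorentzian spaces such as anti-de Sitter or flat Minkowski space, infinite properly discontinuous groups do occur. The only routine matters requiring care are the degenerate configurations, namely when $\gamma$ reverses time orientation (handled by passing to $-\gamma v$) and when $\gamma v=\pm v$ (so that $\gamma K=K$); neither affects the conclusion.
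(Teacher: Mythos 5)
Your proof is correct, and since the paper only quotes this theorem from Calabi--Markus without proving it, the right comparison is to the cited original: your argument is essentially Calabi and Markus's own (the ``Calabi--Markus phenomenon''), namely passing to $dS^m/\Gamma$ via Lorentzian Killing--Hopf and showing the compact waist sphere $v^{\perp}\cap dS^m$ meets every $\Gamma$-translate because the span of two timelike vectors is a Lorentzian $2$-plane with spacelike complement, so proper discontinuity forces $\Gamma$ to be finite. All the degenerate cases ($\gamma v=\pm v$, time-orientation reversal) are handled correctly, so there is no gap.
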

Wolf~\cite{Wo} and Kulkarni~\cite{Kul_2} generalized Theorem~\ref{tm:Ca-Ma} for pseudo-Riemannian space forms,
and Kobayashi~\cite{Kob_2,MR1231232} studied the extension of Theorem~\ref{tm:Ca-Ma} to reductive and solvable homogeneous spaces.

Note that
Lorentzian space forms that have positive curvature are non-compact,
whereas Riemannian space forms with positive curvature are compact.
Theorem~\ref{tm:Ca-Ma}, however,  indicates that
there may be an analogy between the fundamental groups of Riemannian and Lorentzian manifolds satisfying
similar geometric assumptions.
In fact, Kobayashi asks whether the finiteness of the fundamental group still holds
if we perturb the metric of positive constant curvature.
In~\cite{Kob_7}, Kobayashi proposed the following conjecture in specific terms:
\begin{conjecture}[{Kobayashi~\cite[Conjecture 3.8.2]{Kob_7}}]\label{con:Kob}
Let $m$ and $q$ be positive integers with $m \geqslant 2q$.
Assume that $M$ is an $m$-dimensional  geodesically complete pseudo-Riemannian manifold of index $q$,
and that we have  a positive lower  bound on the sectional curvature of $M$.
Then,
\begin{enumerate}
\item[$\mathrm{(1)}$] $M$ is never compact;
\item[$\mathrm{(2)}$] if  $m \geqslant 3$, the fundamental group of $M$ is always finite.
\end{enumerate}
\end{conjecture}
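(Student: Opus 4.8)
The plan is to pass to the universal cover and to separate the two conclusions according to the way the hypothesis acts on the two kinds of directions; the whole difficulty is that, in index $q$, a positive lower bound for the curvature is a non-rigid assumption only on the spacelike part. Indeed, by the Kulkarni rigidity phenomenon a lower bound for the sectional curvature over \emph{all} nondegenerate planes forces constant curvature, and that reading would already be covered by the space-form results of Calabi--Markus, Wolf and Kulkarni; the genuine content of the conjecture therefore lies in the reading $K(\sigma)\geq c>0$ for spacelike (positive-definite) planes $\sigma$, equivalently $\langle R(X,Y)Y,X\rangle \geq c\bigl(\langle X,X\rangle\langle Y,Y\rangle-\langle X,Y\rangle^2\bigr)$ on the spacelike cone. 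Under this reading, if $\gamma$ is a spacelike geodesic and $E$ a parallel spacelike normal field, the plane $\mathrm{span}(\gamma',E)$ is positive-definite, so a normal Jacobi field $J=j(t)E$ obeys $j''+Kj=0$ with $K\geq c$: spacelike geodesics focus exactly as in a positively curved Riemannian manifold, with conjugate points within parameter $\pi/\sqrt{c}$. This Myers-type focusing in the spacelike directions is the single positive input; everything else is the problem of organizing it globally.

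\textbf{Finiteness of the fundamental group.} On the universal cover $\tilde M$ the group $\pi_1(M)=\Gamma$ acts by isometries, and I must show $\Gamma$ is finite. The target structure is a $\Gamma$-invariant splitting $\tilde M\cong B\times F$ in which $F$ carries a complete Riemannian metric of sectional curvature $\geq c$ and $B$ is a $q$-dimensional timelike base. By Myers' theorem $F$ is then compact with finite fundamental group and uniformly bounded diameter. The deck action would restrict to the compact factor through the compact group $\Isom(F)$, while any component acting along $B$ is obstructed by the Calabi--Markus phenomenon: a group acting properly discontinuously cannot translate without bound along directions in which geodesics diverge and distinct orbits fail to stay separated. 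Combining these, $\Gamma$ embeds in a compact group as a discrete subgroup and is therefore finite. The hypothesis $m\geq 2q$ is precisely what makes $\dim F=m-q\geq q$, so that the spacelike (Myers) factor is at least as large as the timelike one and can carry the whole finiteness argument.

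\textbf{Non-compactness.} Given the splitting above, $B$ noncompact forces $\tilde M$, and hence $M$, to be noncompact. Independently I expect a direct argument: the spacelike focusing bounds the diameter of spacelike slices, so a compact $M$ would confine all timelike geodesics to a fixed compact region; completeness together with recurrence of the timelike geodesic flow would then manufacture a complete timelike line, and a Lorentzian splitting theorem of Galloway--Eschenburg type would split off an $\mathbb{R}$-factor, contradicting compactness. For $q=1$ this route is available once the sign of $\Ric$ along timelike directions is controlled; that sign, however, is not seen by the spacelike-plane hypothesis, which is already one of the genuine gaps.

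\textbf{Main obstacle.} The hard part is global, and it is twofold. First, the hypothesis controls only spacelike planes, so neither timelike geodesic completeness in the relevant directions nor the Ricci sign along timelike vectors is directly available; these must be recovered from the interaction of the curvature condition with the global geometry. Second, and more seriously, producing the invariant splitting $\tilde M\cong B\times F$ requires a maximal spacelike distribution that is integrable, $\Gamma$-invariant, and has complete positively curved leaves. For $q=1$ this is exactly where global hyperbolicity, a Cauchy time function and a de Rham--Wu decomposition deliver the parametrized Lorentzian product treated in the present paper; for $q\geq 2$ there is no comparable causal theory, no canonical time function, and the holonomy decomposition need not isolate a single Riemannian leaf from a flat timelike complement. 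Building this structure, and controlling how $\Gamma$ permutes the leaves, is where essentially all of the remaining difficulty of the full conjecture resides.
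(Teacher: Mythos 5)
Your opening sentence already contains the entire proof of the statement as posed, and it is exactly the paper's proof: since the hypothesis is a positive lower bound on the sectional curvature over \emph{all} nondegenerate planes, Kulkarni's rigidity theorem \cite{MR522040} applies --- in dimension at least $3$ a one-sided bound on the sectional curvature of an indefinite metric forces constant curvature --- so $M$ is a geodesically complete pseudo-Riemannian space form of positive curvature, and non-compactness together with finiteness of $\pi_{1}(M)$ are then the theorems of Calabi--Markus \cite{Ca} and Wolf \cite{Wo}; the two-dimensional case of (1) is Kulkarni \cite[Corollary 2.10]{Kul_2}. This is precisely what the paper means when it says the conjecture is solved ``as a result of its curvature condition'': the hypothesis is rigid, and the conjecture collapses onto the space-form case. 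So for the statement you were asked to prove, the route you label as ``already covered'' and then set aside is not a degenerate reading to be avoided --- it \emph{is} the proof, and nothing more is required.

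The remainder of your proposal attacks a different and strictly stronger statement, namely positivity only on spacelike planes (a non-rigid hypothesis), and as a proof it has genuine gaps that you yourself flag: the $\Gamma$-invariant splitting $\widetilde{M} \cong B \times F$ is never constructed; the ``Calabi--Markus phenomenon'' step is a heuristic rather than an argument; and the Ricci sign and the timelike completeness needed for any splitting theorem are, as you note, not consequences of a spacelike-plane hypothesis. That stronger statement is essentially what the rest of this paper is about: the author replaces the rigid hypothesis by the Andersson--Howard condition $R \geqslant k$ together with global hyperbolicity (Conjecture~\ref{conj:new}), and even then proves it only for parametrized Lorentzian products with compact fiber (Theorem~\ref{tm:main}), by combining a bound on the shape operator with the Penrose singularity theorem rather than by splitting the universal cover. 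So your diagnosis of where the real difficulty lives is accurate and matches the paper's own motivation, but the proposal as written establishes the stated conjecture only through the sentence it discards, and does not establish the reinterpreted one.
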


Conjecture~\ref{con:Kob}  is analogous to  Myers' theorem in Riemannian geometry, and can therefore
be posed in terms of understanding the topology of pseudo-Riemannian manifolds of variable
curvature.
Against this aim, however,
Conjecture~\ref{con:Kob}   can be affirmatively solved.
A proof for the case $\dim (M) =2$ was given by Kulkarni~\cite[Corollary 2.10]{Kul_2}.
In the case $\dim (M) \geqslant 3 $,
 Kulkarni~\cite{MR522040}  proved that
the one-sided bound  of the sectional curvature
implies that the sectional curvature is constant.
The positive constant curvature case was proved by
Calabi--Markus~\cite{Ca} and Wolf~\cite{Wo},
and the proof of Conjecture~\ref{con:Kob}  is complete.

Unexpectedly, we have solved Conjecture~\ref{con:Kob} as a result of its curvature condition.
In this paper, we  reformulate Conjecture~\ref{con:Kob} for the Lorentzian case
to  cover some Lorentzian manifolds of variable curvature.
Instead of the one-sided bound on the sectional curvature,
Andersson and Howard~\cite{MR1664893} proposed
 the following curvature condition: there exists a  constant $k$ such that
 \begin{equation*}
 \langle R(u, v) v, u \rangle
\geqslant
k (\langle u, u \rangle \langle v, v \rangle - \langle u, v \rangle^{2})
\end{equation*}
for any tangent vectors $u, v$, where $R$ is the curvature tensor.
Following Andersson--Howard~\cite{MR1664893}, we denote this condition by $R \geqslant k$.
A subset $S$ in a time-oriented Lorentzian manifold $M$ is a \emph{Cauchy hypersurface} if
any inextensible timelike curve meets $S$ at a single point.
$M$ is  said to be \emph{globally hyperbolic}
if there exists a Cauchy hypersurface $S$ in $M$.
Bernal and S{\'a}nchez~\cite[Theorem 1.2]{MR2254187} proved that a globally hyperbolic Lorentzian manifold is diffeomorphic
to a product manifold of a timelike real line and some spacelike submanifold.
$M$ is  said to be \emph{non-spacelike} (resp.\  \emph{lightlike} )  \emph{geodesically complete}
if  any inextensible, non-spacelike (resp.\  lightlike ) geodesic is defined on the real line.
We pose the following:
\begin{conjecture}\label{conj:new}
Assume that $M$ is a non-spacelike geodesically complete globally hyperbolic Lorentzian manifold
with a compact Cauchy hypersurface
of    dimension $m \geqslant 3$.
Suppose that
 there exists a positive constant $k$ such that
$M$ satisfies the curvature condition $R \geqslant k$.
Then,  the fundamental group of $M$ is finite.
\end{conjecture}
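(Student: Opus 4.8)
The plan is to reduce the statement to a single Riemannian comparison on a fiber and then invoke Myers' theorem. By the Bernal--S{\'a}nchez splitting cited above, $M$ is diffeomorphic to $\mathbb{R}\times S$ with $S$ a compact Cauchy hypersurface, so that $\pi_1(M)\cong\pi_1(S)$; it therefore suffices to exhibit a positive lower bound for the curvature of \emph{some} fiber. Concretely, I would work in the parametrized Lorentzian product form $g=-dt^2+g_t$, where $(g_t)_t$ is the smooth path of Riemannian metrics on $S$ obtained from Gaussian (synchronous) coordinates along the unit timelike normal congruence of a Cauchy slice. The whole problem then becomes: produce a time $t_0$ at which $(S,g_{t_0})$ has sectional curvature bounded below by a positive constant. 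Granting this, Myers' theorem applied to the compact fiber $(S,g_{t_0})$ forces $\pi_1(S)$---hence $\pi_1(M)$---to be finite.

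Next I would translate the hypothesis $R\geqslant k$ into information about the path $(g_t)$ through the Gauss equation and the ``radial'' curvatures. Let $A_t$ denote the shape operator of the slice $\{t\}\times S$, so that $A_t=-\tfrac12\,g_t^{-1}\partial_t g_t$. For $g_t$-orthonormal fiber vectors $u,v$ the Gauss equation in Lorentzian signature reads
\[
K^{S}_{g_t}(u,v)=\langle R(u,v)v,u\rangle-\sigma_t(u,v),\qquad \sigma_t(u,v):=\langle A_tu,u\rangle\langle A_tv,v\rangle-\langle A_tu,v\rangle^2,
\]
and since $u,v$ span a spacelike plane the curvature condition gives $\langle R(u,v)v,u\rangle\geqslant k$, whence $K^{S}_{g_t}(u,v)\geqslant k-\sigma_t(u,v)$. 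The second fundamental form thus enters with the sign that \emph{obstructs} the estimate, which is precisely the Lorentzian analogue of the difficulty that positive ambient curvature need not descend to a spacelike slice. The remedy is quantitative: if at some time $t_0$ every eigenvalue of $A_{t_0}$ lies in $(-\sqrt{k},\sqrt{k})$, then $\sigma_{t_0}(u,v)<k$ for every plane, so $K^{S}_{g_{t_0}}>0$ pointwise and, $S$ being compact, uniformly bounded below by a positive constant---exactly what Myers requires.

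The hard part, and the step where lightlike completeness must enter, is to produce such a slice $t_0$. The evolution of $A_t$ is governed by a Riccati equation $\partial_t A_t+A_t^2+\mathcal{R}_t=0$ with $\mathcal{R}_t\,u=R(u,\partial_t)\partial_t$, and the radial part of $R\geqslant k$ (obtained by feeding a timelike $u=\partial_t$ and a spacelike $v$ into the inequality) bounds $\mathcal{R}_t$. I would argue by contradiction: if no good slice existed, the eigenvalues of $A_t$ would stay outside $(-\sqrt k,\sqrt k)$ for all $t$, forcing the metrics $g_t$ to contract at least exponentially (rate $\sqrt k$) toward one temporal end. In the prototype of a generalized Robertson--Walker metric $-dt^2+f(t)^2 g_0$ this is transparent: there $A_t=(f'/f)\,\id$, the spatial bound becomes $K_{g_0}\geqslant k f^2-(f')^2$ and the radial bound becomes $f''\leqslant kf$; a short ODE analysis shows that the absence of a time with $(f')^2<kf^2$ forces $f\to 0$ exponentially, and then the affine length $\int f\,dt$ of a null geodesic is finite at that end, contradicting lightlike geodesic completeness. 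Thus some slice is admissible and the argument closes.

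The principal obstacle is therefore twofold. First, carrying the prototype ODE argument over to a genuinely $t$-dependent family $(g_t)$---replacing the scalar $f$ by the full shape operator and showing, via Riccati comparison together with compactness of the fibers, that uniform definiteness of $A_t$ is incompatible with null completeness---is the technical heart, and is exactly the content I would expect the parametrized-product hypothesis to make tractable. Second, for the conjecture in full generality one must first realize an arbitrary globally hyperbolic metric as a parametrized Lorentzian product with unit lapse; Gaussian coordinates along the normal congruence may develop focal singularities before covering $M$, so this normalization is not automatic and is, I expect, the main reason the general statement remains open while the parametrized-product case is accessible.
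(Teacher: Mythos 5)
You should first be clear about status: the statement you were asked to prove is Conjecture~\ref{conj:new} of the paper, which the paper itself does \emph{not} prove --- it is posed as an open problem, and the paper establishes only the special case of parametrized Lorentzian products with compact fiber (Theorem~\ref{tm:main}), via Proposition~\ref{prop} and Lemma~\ref{le:boundedness}. Your proposal does not prove the conjecture either, as you concede at the end; so the fair comparison is between your strategy and the paper's proof of its partial result, and there your route has a genuine gap, not merely an unfinished technicality.

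The gap is the reliance on Myers' theorem, i.e.\ on producing a slice $t_0$ at which every eigenvalue of the shape operator lies in the \emph{open} interval $(-\sqrt{k},\sqrt{k})$. Two things go wrong. First, a quantifier error: the negation of ``some slice is good'' is that at \emph{each} time there is \emph{some} point and \emph{some} direction with eigenvalue of modulus at least $\sqrt{k}$; it is not that ``the eigenvalues of $A_t$ stay outside $(-\sqrt{k},\sqrt{k})$ for all $t$'', so the exponential-contraction dichotomy you run in the GRW prototype does not even get started in general. Second, and decisively, the open bound is unobtainable from the hypotheses: Riccati comparison along the complete timelike lines (this is exactly the paper's Lemma~\ref{le:boundedness}) yields only the closed bound $|g(S(u),u)|\leqslant\sqrt{k}$, and the equality case genuinely occurs at every point of every slice --- the flat slicing of de Sitter space over a torus, $-dt^{2}+e^{2\sqrt{k}\,t}g_{\mathrm{flat}}$ on $\mathbb{R}\times T^{m-1}$, has $R\equiv k$ and shape operator identically $\sqrt{k}\,\mathrm{id}$, with flat fibers and infinite fundamental group. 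That example is past lightlike incomplete, so completeness is indeed what must exclude it, but Myers can never detect this: in the borderline case the fiber curvature is merely $\geqslant 0$. The paper's argument is engineered precisely for this borderline. From $|S|\leqslant\sqrt{k}$ it concludes only non-negative fiber curvature; it then applies the Toponogov/Cheeger--Gromoll structure theorem (Theorem~\ref{tm:split}), so that if $\pi_{1}$ were infinite a suitable cover would split off a Euclidean line; and in the corresponding Lorentzian cover $\overline{M}$, whose Cauchy hypersurface is non-compact, the equality case of inequality (\ref{ineq}) along the flat direction forces $g(S(\partial/\partial s),\partial/\partial s)\,g(S(u),u)=k$, producing a compact trapped surface $N_{0}$; Penrose's singularity theorem then contradicts lightlike completeness. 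In short, the correct replacement for your ``good slice $+$ Myers'' step is ``non-negative curvature $+$ splitting theorem $+$ Penrose'', and without it your argument fails exactly on the equality set. (Your other, acknowledged obstacle --- that an arbitrary globally hyperbolic metric need not admit global unit-lapse Gaussian coordinates, so the Riccati lemma cannot even be run --- is indeed why both your sketch and the paper stop short of the full conjecture.)
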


 We now make some remarks on Conjecture~\ref{conj:new}.
First, the global hyperbolicity is necessary.
In fact,  $(S^{1} \times S^{m}, -g_{S^{1}}+ g_{S^{m}})$ is  a counterexample of Conjecture~\ref{conj:new}  when
 global hyperbolicity is excluded,     where
 $(S^{m}, g_{S^{m}})$ is an $m$-dimensional sphere with the standard metric.
Second, Beem and Ehrlich~\cite[Corollary 3.8]{MR898152}  proved that
the non-spacelike geodesic completeness is $C^1$-stable for globally hyperbolic manifolds.
Therefore, even if we perturb the metric,
 the assumptions of  Conjecture~\ref{conj:new}  hold.
Third, compared with Conjecture~\ref{con:Kob}, we have omitted spacelike
completeness.
For the global assumption of spacelike direction,
we require the compactness of Cauchy hypersurfaces.
Finally, any  Lorentzian space form of positive curvature satisfies the assumptions of Conjecture~\ref{conj:new}.
This follows from the proof of  Theorem~\ref{tm:Ca-Ma}.

 We now give a partial solution to Conjecture~\ref{conj:new}.
Let $F$ be a manifold, and $\{g_{t}\}_{t \in \mathbb{R}}$ be a smooth family of
Riemannian  metrics on $F$.
We call the Lorentzian manifold $(\mathbb{R} \times F,  -dt^{2} + g_{t})$
\emph{a  parametrized Lorentzian  product}, where
$t$ is the parameter of $\mathbb{R}$.
We call $F$ the \emph{fiber}.
Note that  a  parametrized Lorentzian  product with the fiber compact is globally hyperbolic.
We obtain the following theorem:
\begin{theorem}\label{tm:main}
Let $(M, g)$ be  a lightlike geodesically complete
parametrized Lorentzian product of dimension $m \geqslant 3$ with the fiber compact.
Suppose that there exists a positive constant $k$ such that $R \geqslant k$,
where $R$ is the curvature tensor of $M$.
Then,
 the fundamental group $\pi_{1}(M)$ is finite.
\end{theorem}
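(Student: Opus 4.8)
The plan is to reduce the statement to a Riemannian Bonnet--Myers estimate on a spacelike slice. Since $M=\mathbb{R}\times F$ is homotopy equivalent to $F$, we have $\pi_1(M)\cong\pi_1(F)$, and because $F$ is compact it suffices to prove that the universal Riemannian cover $(\tilde F,\tilde g_0)$ of the slice $F_0=\{0\}\times F$ is compact, equivalently that it has finite diameter; note that $(\tilde F,\tilde g_0)$ is automatically complete, being a Riemannian covering of the compact manifold $(F,g_0)$. First I would unpack the hypothesis $R\geqslant k$ according to the sign of $\langle u,u\rangle\langle v,v\rangle-\langle u,v\rangle^{2}$: on a spacelike $2$-plane this quantity is positive, so $R\geqslant k$ forces the sectional curvature to be $\geqslant k>0$, whereas on a timelike plane it only gives an upper bound, and on a null plane it degenerates to mere nonnegativity. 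Thus the usable positivity lives entirely on the spacelike slices $F_t=\{t\}\times F$, and in particular no focusing can be expected directly from null or timelike geodesics.

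Next I would compare the intrinsic geometry of a slice with the ambient curvature. In the coordinates $-dt^{2}+g_t$ the field $\partial_t$ is a globally defined future-directed unit timelike geodesic field normal to every slice (a synchronous, or Gaussian, normal foliation), and its integral curves $t\mapsto(t,x)$ are complete timelike geodesics; one checks $\nabla_{\partial_t}\partial_t=0$ directly. Writing $K_t=\tfrac12\partial_t g_t$ for the second fundamental form of $F_t$ with respect to $\partial_t$, the Gauss equation expresses the intrinsic sectional curvature of $(F,g_t)$ on a $2$-plane $\sigma$ as
\begin{equation*}
K^{F_t}(\sigma)=K^{M}(\sigma)-\bigl(K_t(X,X)K_t(Y,Y)-K_t(X,Y)^{2}\bigr),
\end{equation*}
for a $g_t$-orthonormal frame $X,Y$ of $\sigma$. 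Here $K^{M}(\sigma)\geqslant k$ by the previous paragraph, so the entire difficulty is concentrated in the shape-operator term, which has no a priori sign. If one could produce a single time $t_0$ at which $K_{t_0}$ is small enough on every plane --- in particular a moment of time symmetry $\partial_t g_t|_{t_0}=0$, as occurs at $t=0$ in de Sitter space --- then $(F,g_{t_0})$ would have intrinsic curvature bounded below by a positive constant and the classical Bonnet--Myers theorem would finish the argument.

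The main obstacle is precisely that a general parametrized product has no moment of time symmetry, and the second fundamental form can drive $K^{F_t}$ negative on some plane at every time. This is where I expect the lightlike completeness to be essential. The shape operator obeys the Riccati equation $\partial_t A+A^{2}+R_{\partial_t}=0$ along the normal congruence, where $R_{\partial_t}=R(\,\cdot\,,\partial_t)\partial_t$ is the tidal operator whose spacelike matrix entries are bounded below through $R\geqslant k$. I would couple this Riccati control with the behaviour of null geodesics: a future-directed null geodesic $\gamma(s)=(t(s),c(s))$ satisfies $\dot t^{2}=g_t(\dot c,\dot c)$, so the $g_t$-length of its spatial projection equals the elapsed coordinate time, and by lightlike completeness such projections persist for all $t$. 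Feeding the $R\geqslant k$ comparison of Andersson--Howard~\cite{MR1664893} into the index-form and focal-point analysis of these null geodesics relative to the slices should prevent the shape operator from degenerating: were $K_t$ to remain large on some plane, one would manufacture a focal point that destroys the achronality forced by global hyperbolicity, contradicting completeness. Quantifying this to extract a uniform positive lower bound for the intrinsic curvature of a suitable complete slice, and then invoking Bonnet--Myers, is the crux of the proof and the step I expect to demand the most care.
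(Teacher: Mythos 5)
Your reduction to the slice, the Gauss-equation bookkeeping, and the Riccati equation are all the right ingredients, but the route you propose for the crux --- producing a slice whose intrinsic sectional curvature has a \emph{positive} lower bound and finishing with Bonnet--Myers --- cannot be carried out, because the hypotheses do not imply that such a slice exists. The Riccati comparison along the normal congruence (which, note, uses only the completeness of the timelike curves $t\mapsto(t,p)$ built into the parametrized product, not lightlike completeness) gives the sharp bound $|g(S(u),u)|\leqslant\sqrt{k}$ for unit $u$ tangent to a slice; inserted into the Gauss equation this yields only
\[
K^{F_t}(\sigma)\;\geqslant\; k - g(S(X),X)\,g(S(Y),Y)\;\geqslant\; k-k\;=\;0,
\]
i.e.\ non-negative, never strictly positive, curvature. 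Equality is genuinely attained: in the flat slicing of de Sitter space, $-dt^{2}+e^{2\sqrt{k}\,t}\sum_{i}(dx^{i})^{2}$, one has $S=\sqrt{k}\,\mathrm{id}$ and every slice is flat, so no refinement of your focal-point step can squeeze out strict positivity. There is a second obstruction to your plan of ``manufacturing focal points'': Penrose-type null focusing arguments require a \emph{non-compact} Cauchy hypersurface, whereas your Cauchy hypersurfaces $F_t$ are compact by hypothesis, so no contradiction with achronality/completeness can be generated on $M$ itself.

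The missing mechanism --- and the way the paper closes exactly this gap --- is to accept non-negative slice curvature and handle the flat directions globally. By the Toponogov/Cheeger--Gromoll structure theorem the universal cover of the slice splits as $\mathbb{R}^{p}\times\widetilde{N}$ with $\widetilde{N}$ closed, and $\pi_{1}(F)$ is already finite unless $p>0$; if $p>0$, one passes to an intermediate cover $\overline{M}$ of $M$ whose Cauchy hypersurface is $\mathbb{R}\times\overline{N}$ with $\overline{N}$ closed --- now non-compact --- observes from the equality case of the Gauss-equation inequality~(\ref{ineq}) that $g(S(\partial/\partial s),\partial/\partial s)\,g(S(u),u)=k$ along the flat factor, so that $\{0\}\times\{0\}\times\overline{N}$ is a compact trapped surface with $\sum_{i}g(S(e_{i}),e_{i})=(m-1)\sqrt{k}>0$, and then invokes the Penrose singularity theorem to contradict lightlike geodesic completeness (which lifts to covers). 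So your instinct that lightlike completeness is essential is correct, but it enters through Penrose's theorem to kill the Euclidean de Rham factor of the slice, not to improve the curvature of the slices; without this splitting-plus-trapped-surface step your argument has no way to handle the equality case, which is precisely the case in which the fundamental group could a priori be infinite.
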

 A  parametrized Lorentzian  product includes a kind of timelike geodesic completeness,
since $\mathbb{R} \times \{p \}$ is a complete timelike geodesic for any $p \in F$.
This is considered to be   the reason   our theorem assumes only lightlike completeness.

\section{A necessary condition for the finiteness of the fundamental group}
In this section, we consider a general setting for Theorem~\ref{tm:main}.
We prove the following proposition:
\begin{proposition}\label{prop}
Let $M$ be a lightlike geodesically  complete  globally hyperbolic Lorentzian manifold.
Assume that $M$ satisfies $R \geqslant k$ for some positive constant $k$,
and suppose that there exists a closed spacelike Cauchy hypersurface $F$ in $M$ such
that the operator norm of the shape operator  is bounded above by $\sqrt{k}$.
Then, the fundamental group of $M$ is finite.
\end{proposition}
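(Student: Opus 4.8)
The plan is to reduce the statement to a compactness assertion about the fiber and then to exploit the tension between the curvature hypothesis and the shape operator bound. First, by the Bernal--S{\'a}nchez splitting cited above, $M$ is diffeomorphic to $\mathbb{R}\times F$, so $\pi_1(M)\cong\pi_1(F)$; since $F$ is closed, proving $\pi_1(M)$ finite is equivalent to proving that the universal Riemannian cover $\widetilde{F}$ of $F$ is compact. I would pass to the universal cover $\widetilde{M}$, which is again globally hyperbolic and lightlike complete, still satisfies $R\geq k$, and has $\widetilde{F}$ as a spacelike Cauchy hypersurface whose shape operator still has operator norm at most $\sqrt{k}$ (all of these being local conditions). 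Arguing by contradiction, I assume $\pi_1(F)$ is infinite, so that $\widetilde{F}$ is a complete, noncompact Riemannian manifold carrying a cocompact, free action of the infinite group $\pi_1(F)$.

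The next step is to read off intrinsic curvature information about $\widetilde{F}$ from the Gauss equation. Writing $\nu$ for the future unit normal and $S$ for the shape operator, the Gauss equation gives, for orthonormal $u,v\in T\widetilde{F}$,
\begin{equation*}
\langle R^{\widetilde{F}}(u,v)v,u\rangle=\langle R(u,v)v,u\rangle-\bigl(\langle Su,u\rangle\langle Sv,v\rangle-\langle Su,v\rangle^{2}\bigr).
\end{equation*}
The hypothesis $R\geq k$ bounds the first term below by $k$, while $\|S\|\leq\sqrt{k}$ forces the parenthesized $2\times 2$ minor to be at most $k$; hence $\widetilde{F}$ has nonnegative sectional curvature, and in particular $\Ric^{\widetilde{F}}\geq 0$. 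Since $\widetilde{F}$ is complete, noncompact and admits a cocompact isometric action, it contains a line, and the Cheeger--Gromoll splitting theorem yields an isometric splitting $\widetilde{F}\cong\mathbb{R}\times N$ along a complete geodesic $c$.

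Now I would use the equality case. Along $c$ every tangent plane containing $\dot c$ is flat, so $\langle R^{\widetilde{F}}(\dot c,w)w,\dot c\rangle=0$ for all $w\perp\dot c$; feeding this back forces simultaneous equality in both estimates above. Thus along $c$ the ambient sectional curvatures satisfy $\langle R(\dot c,w)w,\dot c\rangle=k$ and the minor equals $k$, and since the two compressed eigenvalues of $S$ then have product $k$ with absolute value at most $\sqrt{k}$, they must both equal $\pm\sqrt{k}$; polarizing, $\widetilde{F}$ is totally umbilic along $c$ with $S=\sqrt{k}\,\id$ (after replacing $\nu$ by $-\nu$ if necessary). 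This is exactly the extremal, curvature-saturated configuration realized by the lightlike incomplete flat (``steady state'') slicing of de Sitter space, which is the sharp model I expect to appear.

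The hard part will be converting this pointwise rigidity into a genuine contradiction with lightlike completeness. My plan is to propagate the rigidity off $\widetilde{F}$ along the null congruence issuing from $c$ in the direction $\nu+\dot c$: using the product structure, the initial null expansion is $-(m-2)\sqrt{k}<0$, and since $R\geq k$ yields $\Ric(\ell,\ell)\geq 0$ for null $\ell$, the null Raychaudhuri equation drives the expansion to a caustic within affine parameter $1/\sqrt{k}$. The delicate point is that a caustic alone is compatible with completeness; what must be shown, through the equality (rigidity) case of the Riccati/focal comparison of Andersson--Howard, is that the shear and $\Ric(\ell,\ell)$ are also forced to saturate, so that the structure transverse to $\widetilde{F}$ coincides with the steady-state model and the past-directed null geodesics from $c$ run off the edge of the development in finite affine parameter, contradicting lightlike completeness. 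I expect the main obstacle to be precisely this rigidity-and-propagation step, rather than the reductions or the Gauss-equation bookkeeping.
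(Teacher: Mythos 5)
Your opening reductions track the paper's proof almost exactly: the Gauss-equation argument giving nonnegative sectional curvature of $F$, the Cheeger--Gromoll splitting of the universal cover, and the equality analysis forcing $g(Su,u)=\pm\sqrt{k}$ (and $g(Su,v)=0$) on the flat planes of the split are all present in the paper, and your computations there are correct. The gap is the endgame, and it is a genuine one --- indeed you flag it yourself as the ``main obstacle.'' A local Raychaudhuri/rigidity propagation argument cannot, even in principle, produce the contradiction with lightlike completeness, because the pointwise geometry you derive is realized \emph{inside a null geodesically complete spacetime}: the steady-state half of de Sitter space has totally umbilic flat slices with $S=\sqrt{k}\,\id$, satisfies $R\geqslant k$, and exhibits exactly the null focusing you describe, yet it sits inside full de Sitter space, which is null complete; caustics form precisely where the patch ends, and the geodesics simply continue into the rest of de Sitter. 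Nor can you fall back on ``trapped surface $+$ null energy condition $+$ global hyperbolicity $+$ non-compact Cauchy hypersurface'': that implication is false without compactness of the trapped surface, since Minkowski space contains non-compact codimension-two surfaces with both null mean curvatures of the same sign (take a suitably bent spacelike curve in a timelike $2$-plane and multiply by a flat factor), and Minkowski space is complete. This is fatal for your plan as stated, because in the universal cover the natural cross-section of the splitting is $\mathbb{R}^{p-1}\times\widetilde{N}$, which is non-compact whenever the free abelian rank $p$ is at least $2$.

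The idea your proposal is missing --- and the actual key step of the paper --- is a covering-space trick that manufactures exactly the two hypotheses of the Penrose singularity theorem simultaneously. Instead of the universal cover, the paper passes to the intermediate cover $\overline{M}=\widetilde{M}/\mathbb{Z}^{p-1}$, whose lifted Cauchy hypersurface is $\overline{F}=\mathbb{R}\times\overline{N}$ with $\overline{N}=\mathbb{R}^{p-1}/\mathbb{Z}^{p-1}\times\widetilde{N}$ \emph{compact}. Then $N_{0}=\{0\}\times\{0\}\times\overline{N}$ is a compact trapped surface (by precisely the equality computation you already did, both null divergences equal $(m-2)\sqrt{k}>0$), while the Cauchy hypersurface $\overline{F}$ is still non-compact thanks to the one remaining $\mathbb{R}$-factor. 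Penrose's theorem then yields null geodesic incompleteness of $\overline{M}$, contradicting that $\overline{M}$ covers the lightlike complete $M$. In other words, the hard ``rigidity-and-propagation'' step you anticipated is not needed at all: the only rigidity used is the pointwise saturation you derived, and the global contradiction is delegated to an off-the-shelf singularity theorem, made applicable by choosing the right cover rather than the universal one.
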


Let us  begin the proof of Proposition~\ref{prop}.
First, we calculate  the sectional curvature of $F$.
Let  $\nabla^{\top}$  be  the induced connection of  the hypersurface $F$,
and  $R^{\top}$  be the   curvature tensor  of $F$.
Recall the following, known as the \emph{Gauss formula}:
\begin{equation*}
g(R(u, v)v, u)
= g(R^{\top}(u, v)v, u)
+ g(S(u), u) g(S(v), v) -
g(S(u), v)^{2},
\end{equation*}
for any $u, v \in TF$.
Take any $2$-dimensional  subspace $\Pi$ in $TF$.
Let $(u, v)$ be an orthonormal basis of $\Pi$.
We have
\begin{align}
k (g(u, u)g(v, v)- g(u,v)^{2}) &\leqslant
g(R^{\top}(u, v)v, u)
+ g(S(u), u) g(S(v), v)
-g(S(u), v)^{2}  \nonumber \\
&\leqslant g(R^{\top}(u, v)v, u)
+ g(S(u), u) g(S(v), v), \label{ineq}
\end{align}
where $S$ is the shape operator of $F$.
The left-hand side of the inequalities is $k$.
The requirement of the shape operator  implies that  $|g(S(u), u) g(S(v), v)| \leqslant k$.
Therefore, the sectional curvature of $F$ is non-negative.

Next, we investigate the topology of $F$.
First, recall the following structure theorem for the fundamental group of  a closed  Riemannian manifold of non-negative curvature:
\begin{theorem}[Toponogov~\cite{MR0108808}, {Cheeger--Gromoll~\cite[Theorem 3]{MR0303460}}]\label{tm:split}
Let $M$ be a closed Riemannian manifold of non-negative sectional curvature.
Then, the universal covering Riemannian manifold $\widetilde{M}$ of $M$  can be split isometrically as $\mathbb{R}^{p} \times \widetilde{N}$,
where $\widetilde{N}$ is a closed Riemannian manifold.
Moreover, the fundamental group $\pi_{1}(M)$ includes a free abelian  subgroup $\mathbb{Z}^{p}$ of
finite index that acts properly discontinuously and cocompactly   
 as a  deck transformation on the Euclidean factor.
\end{theorem}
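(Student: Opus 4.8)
The plan is to reduce everything to the isometric splitting theorem for a single line together with the Bieberbach theory of crystallographic groups. Write $\widetilde M$ for the universal cover and $\Gamma := \pi_{1}(M)$ for its deck transformation group; then $\widetilde M$ is a complete, simply connected manifold of non-negative sectional curvature and $\Gamma$ acts on it by isometries freely, properly discontinuously and, since $M$ is closed, cocompactly. The first goal is to produce a canonical isometric decomposition $\widetilde M \cong \mathbb{R}^{p} \times \widetilde N$ with $\widetilde N$ compact; the second is to read off the structure of $\Gamma$ from the two factors.

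For the geometric core I would invoke the splitting theorem in its single-line form: a complete manifold of non-negative Ricci curvature (which $\widetilde M$ certainly is) that contains a line splits off an $\mathbb{R}$-factor isometrically. The proof I have in mind runs through Busemann functions: given a line $\gamma$, form the Busemann functions $b^{+}, b^{-}$ associated with its two rays; the Laplacian comparison estimate shows each is subharmonic in the barrier sense, while the triangle inequality gives $b^{+}+b^{-} \leqslant 0$ with equality along $\gamma$, so the maximum principle forces $b^{+}+b^{-} \equiv 0$ and both functions harmonic. A Bochner-formula argument then shows $\nabla b^{+}$ is parallel of unit length, and its flow realizes the isometric product $\mathbb{R} \times \{b^{+}=0\}$. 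Applying this repeatedly, I split off $\mathbb{R}$-factors until no line remains, obtaining $\widetilde M \cong \mathbb{R}^{p} \times \widetilde N$ with $\widetilde N$ line-free; the resulting maximal Euclidean factor is canonical, hence preserved by every isometry, so $\Isom(\widetilde M) = \Isom(\mathbb{R}^{p}) \times \Isom(\widetilde N)$.

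Next I would show $\widetilde N$ is compact. The key lemma is that a complete manifold carrying a cocompact group of isometries is either compact or contains a line: joining a base point to the far-escaping points of an orbit by minimal geodesics and using cocompactness to drag their midpoints back into a fixed compact set, a limiting argument produces a complete minimizing geodesic. Writing each $\gamma \in \Gamma$ as $(\alpha(\gamma), \beta(\gamma))$ under the product decomposition of the isometry group, the projection $\widetilde M \to \widetilde N$ is $\beta$-equivariant and descends to a surjection from the compact quotient $M$, so the closure of $\beta(\Gamma)$ acts cocompactly on $\widetilde N$. Since $\widetilde N$ is line-free, the lemma forces $\widetilde N$ to be compact, whence $\Isom(\widetilde N)$ is a compact Lie group.

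Finally I would analyze the two projections of $\Gamma$. The kernel of $\alpha \colon \Gamma \to \Isom(\mathbb{R}^{p})$ is a discrete subgroup of the compact group $\Isom(\widetilde N)$, hence finite; and because $\widetilde N$ is compact the image $\alpha(\Gamma)$ acts properly discontinuously and cocompactly on $\mathbb{R}^{p}$, i.e.\ it is a crystallographic group. Bieberbach's theorem then yields a normal free abelian subgroup $\mathbb{Z}^{p}$ of finite index acting by translations cocompactly on $\mathbb{R}^{p}$; pulling it back through $\alpha$ and absorbing the finite kernel by passing to a further finite-index subgroup produces the asserted $\mathbb{Z}^{p} \leqslant \pi_{1}(M)$ of finite index acting properly discontinuously and cocompactly on the Euclidean factor. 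I expect the main obstacle to be the geometric core: establishing the single-line splitting (the harmonicity and rigidity of the Busemann functions) together with the canonicity of the resulting maximal Euclidean factor, since it is precisely this rigidity that makes the isometry group split as a product and thereby forces $\widetilde N$ to be compact.
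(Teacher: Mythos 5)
Your proposal is correct, but note that the paper does not prove this statement at all: it is quoted as a known result of Toponogov and Cheeger--Gromoll, and your argument is essentially the classical proof from the cited source (Cheeger--Gromoll's Theorem 3) --- Busemann-function line splitting iterated to a line-free factor $\widetilde{N}$, the cocompactness/no-line dichotomy forcing $\widetilde{N}$ compact, and Bieberbach theory applied to the crystallographic image of $\pi_{1}(M)$ in $\mathrm{Isom}(\mathbb{R}^{p})$. The only step stated breezily --- extracting a finite-index $\mathbb{Z}^{p}$ from the extension of $\mathbb{Z}^{p}$ by the finite kernel of $\alpha$ --- is standard group theory (pass to the centralizer of the kernel and to suitable powers of lifted generators), so the outline is sound.
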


Let $g_{F}$ be the induced metric of $F$.
We know that $(F,g_{F})$ is a closed Riemannian manifold of non-negative curvature.
From Theorem~\ref{tm:split}, it follows that the universal covering Riemannian manifold $(\widetilde{F}, g_{\widetilde{F}})$ of
$(F, g_F)$ is the Riemannian product
manifold of the Euclidean space and some closed Riemannian manifold $\widetilde{N}$.
Therefore, we show that the dimension of the Euclidean factor is zero.
Suppose, by way of contradiction, that this dimension is not zero.
Then, the fundamental group $\pi_{1}(F)$ has a free abelian normal subgroup $\mathbb{Z}^{p}$ of finite index for some
$p > 0$.
Let $ (\overline{F}, g_{\overline{F}})$ be the quotient Riemannian  manifold $\widetilde{F}/\mathbb{Z}^{p-1}= \mathbb{R} \times
 \overline{N}$, where  $\overline{N}$ denotes $\mathbb{R}^{p-1}/\mathbb{Z}^{p-1} \times \widetilde{N}$.
Then, we have
the Riemannian covering map $\pi: (\overline{F}, g_{\overline{F}}) \rightarrow (F, g_{F})$.
Note that
$g_{\overline{F}}$ is represented as the Riemannian metric $ds^2 + g_{\overline{N}}$,
where $s$ is the parameter of $\mathbb{R}$ and $g_{\overline{N}}$ is the Riemannian metric of $\overline{N}$.

We now state a theorem of Bernal--S{\'a}nchez~\cite{MR2254187}:
\begin{theorem}[{Bernal--S{\'a}nchez~\cite[Theorem 1.2]{MR2254187}}]
Let $(M, g)$ be a globally hyperbolic Lorentzian manifold with a spacelike Cauchy
hypersurface $F$.
Then, there exists a smooth  function $\tau :M\rightarrow \mathbb{R}$ satisfying the following conditions:
\begin{itemize}
\item $\tau$ is a time function, i.e., $\tau$ is strictly increasing along any future directed timelike curve;
\item each level hypersurface  $\tau^{-1}(t)$ is a spacelike Cauchy hypersurface   for any $t \in \mathbb{R}$;
\item  $\tau^{-1}(0)=F$.
\end{itemize}
\end{theorem}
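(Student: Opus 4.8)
The plan is to reduce everything to the construction of a single smooth \emph{temporal function}, by which I mean a smooth $\tau\colon M\to\mathbb{R}$ whose gradient $\nabla\tau$ is everywhere past-directed timelike. Such a $\tau$ automatically satisfies the first two bullets: along any future-directed timelike curve $\gamma$ one has $\frac{d}{ds}\tau(\gamma(s))=g(\nabla\tau,\dot\gamma)>0$, since $\nabla\tau$ is past-directed and $\dot\gamma$ future-directed force their inner product to be positive, so $\tau$ is a time function; and because $\nabla\tau$ is timelike and normal to the fibres, every level set $\tau^{-1}(t)$ is a smooth spacelike hypersurface. The third bullet, $\tau^{-1}(0)=F$, together with the Cauchy property of the levels, will be imposed by hand during the construction. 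First I would recall Geroch's theorem that a globally hyperbolic manifold carries a \emph{continuous} Cauchy time function: choosing a measure $\mu$ on $M$ with $\mu(M)<\infty$ and setting $t^{-}(p)=\mu(J^{-}(p))$, $t^{+}(p)=\mu(J^{+}(p))$, the function $t=\log(t^{-}/t^{+})$ is a continuous Cauchy time function that tends to $\pm\infty$ along every inextensible timelike curve. This yields the topological product and the correct global behaviour, but its level sets are in general only continuous acausal topological hypersurfaces; the whole difficulty is to upgrade continuity to smoothness while keeping the levels spacelike and Cauchy.

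Next I would build the smooth temporal function locally and patch. Around each point one can write down, in a suitable chart adapted to the causal structure, a smooth function with past-directed timelike gradient that mimics $t$; call these local functions $\tau_{i}$ on a locally finite cover $\{U_{i}\}$ with subordinate partition of unity $\{\phi_{i}\}$. To pin the zero set I would treat $F$ separately: since $F$ is already a smooth spacelike Cauchy hypersurface with future-directed timelike unit normal $\nu$, the normal exponential map $\exp^{\perp}(p,s\nu)$ is a diffeomorphism of $(-\varepsilon,\varepsilon)\times F$ onto a neighbourhood of $F$, and in these Gaussian normal coordinates the metric reads $-ds^{2}+g_{s}$, so the signed normal distance $s$ is a smooth local temporal function vanishing \emph{exactly} on $F$ with $\nabla s$ timelike. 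I would choose the partition of unity so that $\tau$ coincides with $s$ on a neighbourhood of $F$, which forces $\tau^{-1}(0)=F$. The candidate global function is then $\tau=\sum_{i}\phi_{i}\tau_{i}$, with the $F$-adapted piece included.

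The hard part is controlling the gradient of this sum. The set of past-directed timelike vectors at a point is an open convex cone, so any positive combination $\sum_{i}\phi_{i}\nabla\tau_{i}$ is again past-directed timelike; the genuine obstruction is the cross term $\sum_{i}\tau_{i}\nabla\phi_{i}$ produced by differentiating the partition of unity, which need not lie in the cone and can tip the gradient out of it. The key technical step is therefore to arrange the local pieces so that this error is dominated: one normalises the $\tau_{i}$ to share a common value up to small error on each overlap, so that $\sum_{i}\nabla\phi_{i}=\nabla\bigl(\sum_{i}\phi_{i}\bigr)=0$ renders the cross term small, and one makes each $\nabla\tau_{i}$ sufficiently steep (large timelike component) that the surviving error cannot move the gradient outside the cone. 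This is the step I expect to be the main obstacle, and it is precisely where the delicate quantitative estimates of Bernal--S\'anchez are required.

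Once $\nabla\tau$ is verified to be past-directed timelike everywhere, $\tau$ is a temporal function, so its levels are smooth spacelike hypersurfaces and $\tau^{-1}(0)=F$ by construction. It remains to check that each $\tau^{-1}(t)$ is \emph{Cauchy}. For this I would ensure in the construction that $\tau$ still tends to $\pm\infty$ along every inextensible timelike curve, a property inherited from the growth of Geroch's $t$; then along such a curve $\tau$ is strictly increasing and surjective onto $\mathbb{R}$, so it meets each level $\tau^{-1}(t)$ exactly once, which is exactly the statement that every level is a spacelike Cauchy hypersurface. Assembling these pieces yields the desired function $\tau$.
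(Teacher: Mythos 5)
The first thing to note is that the paper offers no proof of this statement: it is quoted verbatim as an external result of Bernal--S\'anchez~\cite{MR2254187} and used as a black box in the proof of Proposition 2.1. So your proposal must stand on its own as a proof of the Bernal--S\'anchez theorem, and as such it has a genuine gap. Your reductions are correct: a smooth function whose gradient is everywhere past-directed timelike is automatically strictly increasing along future-directed timelike curves and has spacelike levels; and once $\tau$ is a time function vanishing on a neighbourhood-agreeing copy of $F$, the equality $\tau^{-1}(0)=F$ follows, since a point $q\in\tau^{-1}(0)\setminus F$ would lie on an inextensible timelike curve meeting $F$ at some $p\neq q$, contradicting strict monotonicity of $\tau$ along that curve. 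But the entire mathematical content of the theorem lies in the step you yourself label the ``main obstacle'' and then skip: arranging the local pieces so that the cross term $\sum_{i}\tau_{i}\nabla\phi_{i}$ cannot push the gradient of $\tau=\sum_{i}\phi_{i}\tau_{i}$ out of the timelike cone, while simultaneously (i) keeping $\tau$ equal to the normal-distance function $s$ near $F$ (blending $s$ with the other local pieces re-creates exactly the cross-term problem at the edge of the tubular neighbourhood), (ii) preserving divergence of $\tau$ to $\pm\infty$ along all inextensible causal curves so that every level is Cauchy, and (iii) doing this when $F$ is non-compact, where there is no uniform width for the normal exponential neighbourhood and no uniform ``steepness'' available for the local gradients. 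Writing that this step ``is precisely where the delicate quantitative estimates of Bernal--S\'anchez are required'' is circular: those estimates are the theorem, so a proof must supply them rather than cite them.

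For contrast, the actual argument does not proceed by averaging local coordinate time functions with a partition of unity. Bernal and S\'anchez start from Geroch's continuous Cauchy time function, and between two Geroch levels they construct globally defined smooth ``time step functions'' as locally finite sums of carefully chosen local functions (built from Lorentzian-distance-type data supported in causal diamonds), using global hyperbolicity --- compactness of the sets $J^{+}(p)\cap J^{-}(q)$ --- to get the uniform gradient control that your sketch only postulates; the version you need here, in which a \emph{prescribed} spacelike Cauchy hypersurface $F$ is realized exactly as a level set, required a further separate argument (the 2006 paper cited in the statement) on top of the 2003--2005 smoothing results. Your outline correctly identifies where the difficulty lives and correctly handles the soft implications around it, but as a proof it assumes the hard part rather than proving it.
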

Let $\phi$ be the gradient flow $\phi : \mathbb{R} \times F \rightarrow M$ of   the time function $\tau$ in the above theorem,
and note that  $\phi$ is a diffeomorphism such that $\phi(\{0\} \times F)=F$.
We use the same letter $g$ for the Lorentzian metric of $\mathbb{R}\times F$ induced from $M$.
Then, the restricted metric $g|_{\{0\}\times F}$ is $g_{F}$.
Note that the covering map $\pi:\overline{F} \rightarrow F$ extends naturally to
the covering map $\id \times \pi:
\mathbb{R} \times \overline{F} \rightarrow \mathbb{R} \times F$.
We denote the Lorentzian manifold $(\mathbb{R} \times \overline{F}, (\id\times \pi)^{*}g)$ as $\overline{M}=(\mathbb{R} \times \overline{F}, g_{\overline{M}})$.

Let $P$ be a submanifold of codimension $2$ in $M$.
Then, there exist two linearly independent
future directed lightlike vector fields $l^{+}, l^{-}$
perpendicular to $P$.
$P$ is called a \emph{trapped surface} if
$\divergence_{P} (l^+)>0$ and $\divergence_{P} (l^-)>0$.
Let us recall the Penrose singularity theorem:

\begin{theorem}[Penrose~\cite{MR0172678}]
Let $M$ be a globally hyperbolic Lorentzian manifold  of dimension $m \geqslant 3$
with a  non-compact Cauchy hypersurface.
Assume that  $M$ satisfies the following two conditions:
\begin{itemize}
\item $\Ric (u, u) \geqslant 0$ for any future directed lightlike tangent vector $u$;
\item $P$ is a compact trapped surface.
\end{itemize}
Then,  $M$ is not lightlike geodesically complete.
\end{theorem}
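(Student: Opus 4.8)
The plan is to argue by contradiction: I would assume that $M$ \emph{is} lightlike geodesically complete and deduce that its Cauchy hypersurface must be compact, contrary to hypothesis. The engine of the argument is a focusing estimate for the two null congruences normal to $P$. Choosing smooth future directed null normal fields $l^{+}, l^{-}$ along the compact surface $P$ and following the orthogonal null geodesics they generate, I would record the Raychaudhuri equation for the divergence $\theta := \divergence(l^{\pm})$ of each congruence. Since these congruences are hypersurface orthogonal their vorticity vanishes, so in the sign convention for which the trapped condition reads $\divergence_{P}(l^{\pm}) > 0$ one has
\[
\frac{d\theta}{d\lambda} = \frac{\theta^{2}}{m-2} + |\sigma|^{2} + \Ric(l, l) \geqslant \frac{\theta^{2}}{m-2},
\]
where $\sigma$ is the shear and the inequality uses the null energy condition $\Ric(u,u) \geqslant 0$. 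Because $P$ is compact, the trapped hypothesis supplies a uniform lower bound $\theta_{0} > 0$ for $\theta|_{P}$ on both families; a Riccati comparison then forces $\theta$ to blow up, that is, a focal point to occur, at affine parameter at most $\lambda_{0} := (m-2)/\theta_{0}$ along every orthogonal null geodesic. Here the completeness assumption is precisely what guarantees that each geodesic extends far enough to reach its focal point.

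Next I would prove that the topological boundary $\partial J^{+}(P)$ is compact. Global hyperbolicity (hence causal simplicity) makes $J^{+}(P)$ closed, and standard causality theory identifies every point of $\partial J^{+}(P)$ as the future endpoint of a null geodesic generator issuing orthogonally from $P$ and carrying no focal point in its interior, since beyond a focal point such a geodesic enters the chronological future $I^{+}(P)$ and thereby leaves the boundary. Combined with the focusing estimate, every relevant generator has met its focal point by affine time $\lambda_{0}$, so $\partial J^{+}(P)$ is contained in the image of the compact set $P \times \{+,-\} \times [0, \lambda_{0}]$ under the normal exponential map, which is well defined because $M$ is lightlike complete. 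Being a closed subset of a compact set, $\partial J^{+}(P)$ is compact; as an achronal boundary it is also a nonempty topological hypersurface without boundary, hence a closed topological $(m-1)$-manifold.

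Finally I would extract the contradiction. Let $\Sigma$ be the non-compact Cauchy hypersurface, and project $\partial J^{+}(P)$ onto $\Sigma$ along the flow lines of a future directed timelike vector field (equivalently, along the $\mathbb{R}$-factor of the global splitting $M \cong \mathbb{R} \times \Sigma$). This projection is continuous, and it is injective because $\partial J^{+}(P)$ is achronal: two points on one flow line would be chronologically related. It therefore maps the $(m-1)$-manifold $\partial J^{+}(P)$ injectively and continuously into the $(m-1)$-manifold $\Sigma$, so by invariance of domain its image is open; the image is also compact, hence closed, and it is nonempty. Since $\Sigma$ is connected, the image is all of $\Sigma$, whence $\Sigma$ is compact, contradicting the hypothesis that the Cauchy hypersurface is non-compact. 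Thus $M$ cannot be lightlike geodesically complete. The step I expect to be most delicate is the compactness of $\partial J^{+}(P)$ in the second paragraph: it rests on the causal-theoretic description of the boundary generators as orthogonal null geodesics without prior focal points, together with the focal-point lemma placing post-focal points in $I^{+}(P)$, and these facts must be marshalled carefully from global hyperbolicity.
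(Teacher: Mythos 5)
The paper offers no proof of this statement: it is Penrose's singularity theorem, quoted with a citation and used as a black box in the proof of Proposition~\ref{prop}. So your proposal can only be compared with the classical argument, and it is in fact exactly that argument --- Raychaudhuri focusing along the two null congruences normal to the compact trapped surface, compactness of the achronal boundary via the absence of focal points on its generators, and the continuous injective projection of that boundary onto the Cauchy hypersurface, with invariance of domain forcing the Cauchy hypersurface to be compact. The structure is sound and essentially complete. The one genuine wrinkle is your sign bookkeeping. With the paper's convention that trapped means $\divergence_{P} (l^+)>0$ and $\divergence_{P} (l^-)>0$ for \emph{future directed} null normals (and this is precisely what is verified for $N_{0}$ in Section~2), the Raychaudhuri equation for the future-directed congruence reads $d\theta/d\lambda = -\theta^{2}/(m-2)-|\sigma|^{2}-\Ric(l,l)$, so a positive initial $\theta$ merely decays toward the future and no future focal point is forced; the blow-up occurs in the \emph{past} direction. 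Your equation with the $+$ signs is the Raychaudhuri equation parametrized toward the past, so it must be paired with $\partial J^{-}(P)$ and past-directed generators, not with $\partial J^{+}(P)$ as you wrote. This is a repairable labeling slip rather than a gap: the conclusion ``not lightlike geodesically complete'' is time-symmetric, so you may either reverse the time orientation at the outset (turning the hypothesis into the usual future-trapped condition, both expansions negative) or run your second and third paragraphs verbatim with $J^{-}(P)$ in place of $J^{+}(P)$. Two smaller points deserve a line each: nonemptiness of the achronal boundary (a time function on $M$ is bounded on the compact $P$, so $J^{\pm}(P)\neq M$, and a nonempty closed proper subset of connected $M$ has nonempty boundary), and connectedness of the Cauchy hypersurface $\Sigma$, which follows from connectedness of $M$ and the homeomorphism $M\cong\mathbb{R}\times\Sigma$; both are needed for your final surjectivity argument.
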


Note that the curvature condition of Proposition~\ref{prop} implies  $\Ric (u, u) \geqslant 0$ for any lightlike tangent vector $u$.
Let  $N_{0}$ be the submanifold $\{0\} \times \{0\} \times \overline{N}$ of $\overline{M}=(\mathbb{R} \times \mathbb{R} \times \overline{N}, g_{\overline{M}})$.
We write  $l_{+}$ and  $l_{-}$   for
the normal lightlike tangent factors $n + \partial / \partial s$ and
 $n- \partial / \partial s$, respectively,
 where $n$ is the unit normal vector of the Cauchy hypersurface $\{ 0\} \times \mathbb{R} \times \overline{N}$,
 and $s$ is the parameter of the real line $\mathbb{R}$ of  $\{ 0\} \times\mathbb{R} \times \overline{N}$.
Take an orthonormal basis $\{e_{i} \}_{i=1}^{m-1}$ of the tangent space of $N_{0}$.
Since
$N_{0}$ is a totally geodesic submanifold in
 $\{ 0\} \times \mathbb{R} \times \overline{N}$,
we have
\begin{align*}
\sum_{i=1}^{m-1} g(\nabla_{e_{i}} l_{+}, e_{i})
&=
\sum_{i=1}^{m-1} g(\nabla_{e_{i}} l_{-}, e_{i})
=
\sum_{i=1}^{m-1} g(\nabla_{e_{i}}  n , e_{i})\\
&=
\sum_{i=1}^{m-1} g(S(e_{i}), e_{i}).
\end{align*}

 We should remark that $g(R^{\top}(u, \partial/ \partial s) \partial / \partial s, u)=0$ for
 any unit tangent vector $u$ of $N_{0}$.
Therefore, using inequality (\ref{ineq}), we have
$g(S(\partial/ \partial s),  \partial/ \partial s) g(S(u),  u) =k$.
Without loss of generality, we can assume that $ g(S(\partial/ \partial s),  \partial/ \partial s)=\sqrt{k}$.
Then, we obtain
\begin{equation*}
\sum_{i=1}^{m-1} g(S(e_{i}), e_{i})=(m-1) \sqrt{k}>0.
\end{equation*}
Thus, $N_{0}$ is a trapped submanifold.
From  the  Penrose singularity theorem,    $\overline{M}$ is not lightlike geodesically complete.
This is a contradiction, and Proposition~\ref{prop} has been proved.

\section{Proof of Theorem~\ref{tm:main}}
In this section, we prove Theorem~\ref{tm:main}.
During the proof,
$M$ is a parametrized Lorentzian manifold $(\mathbb{R} \times F, g=-dt^2 + g_t)$.
For any $p\in F$,  we define the curve $\gamma_{p}: \mathbb{R} \rightarrow M$ by
$\gamma_{p}(t)=(t,p)$. It is easy to check that $\gamma_{p}$ is a timelike geodesic.
We denote   the hypersurface $\{t\} \times F$ of $M$ as $F_{t}$;
note that $\partial / \partial t$ is a normal vector of $F_{t}$, denoted by $n$.
We define a second fundamental form   $S : TF_{t} \rightarrow TF_{t}$
by $S(u)=\nabla_{u} n$, and the curvature operator $R_{n} : TF_{t} \rightarrow TF_{t}$  by
$R_{n}(u)=R(u, n) n$.
Then, we have the following Riccati equation:
\begin{equation}
\nabla_{n}(S) + S^{2} + R_{n} = 0.
\end{equation}
We obtain the following lemma:
\begin{lemma}\label{le:boundedness}
For any $p \in F$ and
any $u \in T_{p}F$ with $g_{0}(u,u)=1$,   let $\widetilde{u}(t)$ be the parallel vector field along the timelike geodesic $\gamma_{p}(t)$ in $M$
such that    $\widetilde{u}(0)=(0, u) \in
T_{0}\mathbb{R} \oplus T_{p}F =
T_{\gamma_{p}(0)}\mathbb{R} \times F$.
Then, we have
\begin{equation*}
|g(S(\widetilde{u}(t)),  \widetilde{u}(t) )| \leqslant \sqrt{k},
\end{equation*}
for any $t \in \mathbb{R}$.
\end{lemma}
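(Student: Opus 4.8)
The plan is to reduce the lemma to a scalar Riccati-type differential inequality along the complete timelike geodesic $\gamma_p$ and then to control its solution by a comparison/blow-up argument. Set $f(t) = g(S(\widetilde{u}(t)), \widetilde{u}(t))$. First I would record the geometry of $\widetilde{u}$. Since $\gamma_p$ is a geodesic with velocity $n = \partial/\partial t$, the restriction of $n$ to $\gamma_p$ is parallel; because parallel transport preserves $g$, the quantities $g(\widetilde{u}, \widetilde{u})$ and $g(\widetilde{u}, n)$ are constant in $t$. From the initial data $\widetilde{u}(0) = (0, u)$ we get $g(\widetilde{u}(t), \widetilde{u}(t)) = g_0(u,u) = 1$ and $g(\widetilde{u}(t), n) = 0$ for all $t$, so $\widetilde{u}(t)$ is a unit spacelike vector lying in $T_{\gamma_p(t)} F_t = n^{\perp}$, while $g(n,n) = -1$.

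Next I would differentiate $f$. Since $\widetilde{u}$ is parallel along $\gamma_p$ (so $\nabla_n \widetilde{u} = 0$) and $g$ is parallel, only the derivative of $S$ survives, and substituting the Riccati equation $\nabla_n(S) = -S^2 - R_n$ gives
\[
f'(t) = g((\nabla_n S)\widetilde{u}, \widetilde{u}) = -g(S^2 \widetilde{u}, \widetilde{u}) - g(R(\widetilde{u}, n)n, \widetilde{u}).
\]
Now I would bound the two terms. The shape operator $S$ is self-adjoint with respect to the Riemannian induced metric on the spacelike hypersurface $F_t$, so $g(S^2 \widetilde{u}, \widetilde{u}) = g(S\widetilde{u}, S\widetilde{u})$, and the Cauchy--Schwarz inequality together with $g(\widetilde{u}, \widetilde{u}) = 1$ yields $g(S^2 \widetilde{u}, \widetilde{u}) \geqslant g(S\widetilde{u}, \widetilde{u})^2 = f(t)^2$. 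For the curvature term, the hypothesis $R \geqslant k$ applied to the pair $(\widetilde{u}, n)$ with $g(\widetilde{u}, \widetilde{u}) = 1$, $g(n,n) = -1$, $g(\widetilde{u}, n) = 0$ gives $g(R(\widetilde{u}, n)n, \widetilde{u}) \geqslant k(-1) = -k$. Combining these yields the differential inequality
\[
f'(t) \leqslant k - f(t)^2, \qquad t \in \mathbb{R}.
\]

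Finally, the crucial observation is that $f$ is defined and smooth on all of $\mathbb{R}$, since the parametrized product structure makes $S$ smooth on every $F_t$ and $\gamma_p$ is a complete timelike geodesic. I would then compare with the autonomous equation $y' = k - y^2$, whose solutions lying above $\sqrt{k}$ blow up to $+\infty$ in finite \emph{backward} time and whose solutions lying below $-\sqrt{k}$ blow up to $-\infty$ in finite \emph{forward} time. If $f(t_1) > \sqrt{k}$ for some $t_1$, the inequality forces $f$ to dominate, for $t < t_1$, the solution of $y' = k - y^2$ with $y(t_1) = f(t_1)$, so $f$ would blow up in finite backward time; symmetrically, if $f(t_1) < -\sqrt{k}$, then $f$ is dominated by such a solution for $t > t_1$ and blows up in finite forward time. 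Either case contradicts the global existence and finiteness of $f$, so $-\sqrt{k} \leqslant f(t) \leqslant \sqrt{k}$ for all $t$, which is the assertion. I expect the genuinely delicate point to be this last comparison: one must match the \emph{one-sided} inequality $f' \leqslant k - f^2$ to blow-up in the correct time direction separately for the upper and lower bounds, and invoke global existence of $f$ to rule it out; by contrast, the curvature and self-adjointness estimates are routine once the Lorentzian signs ($g(n,n) = -1$) are tracked carefully.
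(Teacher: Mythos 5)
Your proof is correct and takes essentially the same route as the paper: both differentiate $f(t)=g(S(\widetilde{u}(t)),\widetilde{u}(t))$ via the Riccati equation, use Cauchy--Schwarz and the bound $g(R_{n}(\widetilde{u}),\widetilde{u})\geqslant -k$ to get $f'\leqslant k-f^{2}$, and then rule out $|f|>\sqrt{k}$ by comparison with the $\sqrt{k}\coth$ solution, whose finite-time blow-up contradicts the fact that $f$ is defined on all of $\mathbb{R}$. The only cosmetic differences are that the paper reduces the case $f<-\sqrt{k}$ to $f>\sqrt{k}$ by a ``without loss of generality'' (time-reversal symmetry) while you treat it directly by forward-in-time blow-down, and that you spell out the routine facts (parallel transport keeps $\widetilde{u}$ unit, spacelike, and tangent to $F_{t}$; $S$ is self-adjoint) that the paper leaves implicit.
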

\begin{proof}\renewcommand{\qedsymbol}{}
The Riccati equation implies
\begin{equation*}
\frac{\partial}{\partial t}   g(S(\widetilde{u}(t)),     \widetilde{u}(t))
= - g(R_{n}(\widetilde{u}(t)),     \widetilde{u}(t)) -g(S(\widetilde{u}(t)),    S(\widetilde{u}(t)) ).
\end{equation*}
Using the Cauchy--Schwartz inequality, we have
$g(S(\widetilde{u}(t)),    S(\widetilde{u}(t)) ) \geqslant |g(S(\widetilde{u}(t)),    \widetilde{u}(t)) |^{2}$.
From the curvature condition, it follows that    $g(R_{n}(\widetilde{u}(t)),     \widetilde{u}(t)) \geqslant - k$.
Setting $f(t)=   g(S(\widetilde{u}(t)),     \widetilde{u}(t)) $,
 we obtain the following inequality:
\begin{equation*}
\frac{\partial}{ \partial t} f(t) \leqslant k - f(t)^{2}
\end{equation*}
for any $t \in \mathbb{R}$.

Let us now suppose that there exists $t_{0}$ such that
$|f(t_{0})|=| g(S(\widetilde{u}(t_{0})),     \widetilde{u}(t_{0}))  | >\sqrt{k}$.
Without loss of generality, we can assume that $f(t_{0}) > \sqrt{k}$.
Then,  there exists a positive number $t_{1}$ such that
$f(t_{0})=\sqrt{k} \coth (\sqrt{k} t_{1})$.
By the  Riccati argument, we have
\begin{equation*}
f(t_{0}+ t) \geqslant  \sqrt{k} \coth (\sqrt{k} (t_{1}+t))
\end{equation*}
for any $t \leqslant 0$.
However the right-hand side of the inequality  goes to  infinity
as $t$ approaches $-t_{1}$.
This is a contradiction, and the proof of Lemma~\ref{le:boundedness} is complete.
\end{proof}
Theorem~\ref{tm:main} follows from Proposition~\ref{prop} and Lemma~\ref{le:boundedness}.

\providecommand{\bysame}{\leavevmode\hbox to3em{\hrulefill}\thinspace}
\providecommand{\MR}{\relax\ifhmode\unskip\space\fi MR }
\providecommand{\MRhref}[2]{%
  \href{http://www.ams.org/mathscinet-getitem?mr=#1}{#2}
}
\providecommand{\href}[2]{#2}

\end{document}